\newtheorem{thm}{Theorem}
\theoremstyle{plain}
\theoremstyle{definition}
\newtheorem{dfn}{Definition}
\numberwithin {equation}{section}
\begin{document}
\title
{An infinite dimensional pursuit--evasion differential game with
finite number of players}
\author{Mehdi Salimi$^a$\thanks{Corresponding author: mehdi.salimi@tu-dresden.de, mehdi.salimi@medalics.org}
\and Massimiliano Ferrara$^b$\thanks{massimiliano.ferrara@unirc.it}}
\date{}
\maketitle
\begin{center}
$^{a}$ Center for Dynamics, Department of Mathematics, Technische
Universit\"{a}t Dresden, Germany\\
$^{b}$ Department of Law and Economics, Universit\`{a}
Mediterranea di Reggio Calabria, Italy\\
$^{a,b}$ MEDAlics, Research Center at Universit\`{a} per Stranieri
Dante Alighieri, Reggio Calabria, Italy
\end{center}
\date{}
\maketitle
\begin{abstract}
We study a pursuit--evasion differential game with finite number
of pursuers and one evader in Hilbert space with geometric
constraints on the control functions of players. We solve the game
by presenting explicit strategies for pursuers which guarantee
their pursuit as well as an strategy for the evader which
guarantees its evasion.

\medskip
\noindent \textbf{Keywords}: Differential game, Pursuit--evasion
game, Geometric control constraints.
 \end{abstract}
\section{Introduction and preliminaries}

Differential games and pursuit--evasion problems are investigated
by many authors and significant researches are given by Isaacs
\cite{isa} and Petrosyan \cite{pet}.

Ibragimov and Salimi \cite{is} study a differential game of
optimal approach of countably many pursuers to one evader in an
infinite dimensional Hilbert space with integral constraints on
the controls of the players. Ibragimov et al.\ \cite{isaa} study
an evasion problem from many pursuers in a simple motion
differential game with integral constraints. In \cite{sal} Salimi
et al.\ investigate a differential game in which countably many
dynamical objects pursue a single one. All the players perform
simple motions. The duration of the game is fixed. The controls of
a group of pursuers are subject to integral constraints and the
controls of the other pursuers and the evader are subject to
geometric constraints. The payoff of the game is the distance
between the evader and the closest pursuer when the game is
terminated. They construct optimal strategies for players and find
the value of the game.

In the present paper, we solve a pursuit--evasion differential
game with geometric constraints on the controls of players. In
other words a pursuit of one player by finite number of dynamical
players.

In the Hilbert space $\ell_2 = \{\alpha = (\alpha_k)_{k \in
\mathbf{N}} \in \mathbf{R}^{\mathbf{N}} :
\sum_{k=1}^\infty\alpha_k^2<\infty\}$ with inner product
$(\alpha,\beta)=\sum_{k=1}^\infty\alpha_k\beta_k$, the motions of
the pursuers $P_i$ and the evader $E$ are defined by the
equations:

\begin{equation}\label{ab0}
 \begin{split}
&(P_i):\dot{x}_i=u_i(t), \quad x_i(0)=x_{i0},
, \quad i=1,2,\ldots,m\\
&(E):\dot{y}=v(t), \quad y(0)=y_0,
 \end{split}
\end{equation}
where $x_i,x_{i0},y,y_0\in \ell_2,$
$u_i=(u_{i1},u_{i2},\ldots,u_{i\zeta},\ldots)$ is the control
parameter of the pursuer $P_i$, and
$v=(v_1,v_2,\ldots,v_\zeta,\ldots)$ is that of the evader $E$. In
the following definitions $i=1,2,\ldots,m$.
\begin{dfn}
A function $u_i(\cdot),$ $u_i:[0,\infty)\to \ell_2,$ such that
$u_{i\zeta}:[0,\infty)\to R^1,$ $\zeta=1,2,\ldots,$ are Borel
measurable functions and
$$
\|u_i(t)\|=\Big( \sum_{\zeta=1}^{\infty} u_{i\zeta}(t)^{2} \,dt
\Big)^{\frac{1}{2}}
  \leq
  1
$$
is called an \textit{admissible control of the ith pursuer}.
\end{dfn}
\begin{dfn}
A function $v(\cdot),$ $v:[0,\infty)\to \ell_2,$ such that
$v_\zeta:[0,\infty)\to R^1,$ ${\zeta=1,2,\ldots,}$ are Borel
measurable functions and
$$
\|v(t)\|=\Big(\sum_{\zeta=1}^{\infty} v_{\zeta}(t)^{2} \,dt
\Big)^{\frac{1}{2}}
  \leq
  1,
$$
is called an \textit{admissible control of the evader}.
\end{dfn}
Once the players' admissible controls $u_i(\cdot)$ and $v(\cdot)$
are chosen, the corresponding motions $x_i(\cdot)$ and $y(\cdot)$
of the players are defined as
$$
x_i(t)=(x_{i1}(t),x_{i2}(t),\ldots,x_{i\zeta}(t),\ldots), \quad
y(t)=(y_1(t),y_2(t),\ldots,y_\zeta(t),\ldots),
$$
\begin{equation*}
x_{i\zeta}(t)=x_{i\zeta0}+\int \limits_0^tu_{i\zeta}(s)\,ds, \quad
y_\zeta(t)=y_{\zeta0}+\int\limits_0^tv_\zeta(s)\,ds.
\end{equation*}
\begin{dfn}
A function $U_i(t,x_i,y,v),$ $U_i:[0,\infty)\times \ell_2\times
\ell_2\times \ell_2\to \ell_2,$ such that the system
\begin{equation*}
 \begin{split}
&\dot{x}_i=U_i(t,x_i,y,v), \quad x_i(0)=x_{i0},\\
&\dot{y}=v, \quad y(0)=y_0,
 \end{split}
\end{equation*}
has a unique solution $(x_i(\cdot),y(\cdot))$ for an arbitrary
admissible control $v=v(t),$ $0\le t<\infty,$ of the evader $E,$
is called a \textit{strategy of the pursuer~$P_i$}. A strategy
$U_i$ is said to be \textit{admissible} if each control formed by
this strategy is admissible.
\end{dfn}
\begin{dfn}
A function $V(t,x_1,\ldots,x_m,y),$ $V:[0,\infty)\times
\underbrace{\ell_2\times\ldots\times \ell_2}_{m+1}\to \ell_2,$
such that the system of equations
\begin{equation*}
 \begin{split}
&\dot{x}_i=u_i, \quad x_i(0)=x_{i0},\\
&\dot{y}=V(t,x_1,\ldots,x_m,y), \quad y(0)=y_0,
 \end{split}
\end{equation*}
has a unique solution $(x_1(\cdot),\ldots,x_m(\cdot),y(\cdot))$
for arbitrary admissible controls $u_i=u_i(t),$ $0\le t<\infty,$
of the pursuers $P_i,$ is called a \textit{strategy} of the evader
$E.$ If each control formed by a strategy $V$ is admissible, then
the strategy $V$ itself is said to be \textit{admissible}.
\end{dfn}
\section{Pursuit problem and its solution}

\begin{dfn}
If $x_i(\tau)=y(\tau)$ at some $i$ and $\tau>0$, then pursuit is
considered complete.
\end{dfn}
\begin{thm}
Suppose the initial positions of the pursuers and the evader in
the game (\ref{ab0}) are different and for any non-zero vector
$p\in \ell_2$, there is $k\in \{1,2,\ldots,m\}$ such that
$(y_0-x_{k0},p)<0$, then pursuit is complete.
\end{thm}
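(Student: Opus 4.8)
The plan is to give each pursuer the parallel-pursuit ($\Pi$-) strategy and reduce capture to a one-dimensional monotonicity statement along the initial line of sight. Since the initial positions are distinct, set $d_i := \|x_{i0}-y_0\| > 0$ and $\zeta_i := (x_{i0}-y_0)/d_i$, the unit vector pointing from the evader to $P_i$ at $t=0$. For the pursuer I would take
\[ u_i(t) = v(t) - \lambda_i(t)\,\zeta_i, \qquad \lambda_i(t) = (v(t),\zeta_i) + \sqrt{(v(t),\zeta_i)^2 + 1 - \|v(t)\|^2}, \]
where $\lambda_i(t)$ is the nonnegative root of $\|v(t)-\lambda\zeta_i\|^2 = 1$. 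The discriminant is nonnegative because $\|v(t)\|\le 1$ and $|(v(t),\zeta_i)|\le\|v(t)\|$, and one checks $\lambda_i(t)\ge 0$; hence $\|u_i(t)\|=1$, so this is an admissible strategy, and it is legitimate feedback since the pursuer's strategy $U_i$ is allowed to depend on the evader's control $v$.

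Next I would pass to relative coordinates $z_i := y - x_i$. Then $\dot z_i = v - u_i = \lambda_i\zeta_i$ with $z_i(0) = y_0 - x_{i0} = -d_i\zeta_i$, so
\[ z_i(t) = \Big(\int_0^t \lambda_i(s)\,ds - d_i\Big)\zeta_i . \]
Because $s\mapsto\int_0^s\lambda_i$ is nondecreasing, $P_i$ completes the pursuit exactly at the first instant $\tau_i$ with $\int_0^{\tau_i}\lambda_i(s)\,ds = d_i$. Thus the theorem reduces to showing that, under the stated hypothesis, $\min_i \tau_i < \infty$; equivalently, that the evader cannot keep $\int_0^t\lambda_i(s)\,ds < d_i$ for every $i$ and all $t$.

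For the final step I would argue by contradiction, assuming no capture ever occurs. Using $\lambda_i(t) \ge (v(t),\zeta_i) + \sqrt{1-\|v(t)\|^2}$ and integrating gives, for every $i$ and $t$,
\[ d_i > \int_0^t\lambda_i(s)\,ds \ge (y(t)-y_0,\zeta_i) + \int_0^t \sqrt{1-\|v(s)\|^2}\,ds . \]
Applying the hypothesis to the vector $p = y(t)-y_0$ (when it is nonzero) produces an index $k$ with $(y_0 - x_{k0}, p) < 0$, i.e. $(\zeta_k, y(t)-y_0) > 0$, which forces $\int_0^t\sqrt{1-\|v(s)\|^2}\,ds < d_k \le \max_i d_i$. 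So the evader is pushed toward full speed, $\|v\|\to1$, while its displacement keeps satisfying $(y(t)-y_0,\zeta_i)<d_i$ for every $i$.

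The hard part will be converting this ``every escape direction is blocked by some $\zeta_k$'' information into a genuine finite capture time, and this is the step I would scrutinize most carefully. The real danger is drift in directions orthogonal to $\mathrm{span}\{\zeta_1,\dots,\zeta_m\}$: such motion inflates $\|v\|^2$ while leaving every $(v,\zeta_i)$ unchanged, hence shrinks all $\lambda_i$ toward $0$ and stalls the progress $\int_0^t\lambda_i$. Precluding this is exactly why the hypothesis must be invoked for \emph{every} nonzero $p\in\ell_2$ and not merely for $p$ in the span of the $\zeta_i$. My plan would be to record the two sharper bounds $\lambda_i \ge 2\max\{(v(t),\zeta_i),0\}$ and $\lambda_i \ge \tfrac13\big(1-\|v(t)\|^2\big)$, and to use the first together with compactness of the unit sphere of $\mathrm{span}\{\zeta_i\}$ to control progress along that span while the second controls the complementary speed; the crux is to show these two demands cannot be simultaneously defeated, thereby forcing $\int_0^t\lambda_i$ to reach $d_i$ for some $i$ in finite time.
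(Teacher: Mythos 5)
Your pursuer strategy is, after the sign change $\zeta_i=-e_i$, identical to the paper's strategy (\ref{a}), and your reduction $z_i(t)=\bigl(\int_0^t\lambda_i(s)\,ds-d_i\bigr)\zeta_i$ is exactly the paper's identity $y(t)-x_i(t)=e_i\Omega_i(t)$; up to that point the two arguments coincide. But your proposal stops at the decisive step: you explicitly leave open ``the crux \ldots\ to show these two demands cannot be simultaneously defeated,'' and that unproved crux \emph{is} the theorem. The paper closes it by aggregation rather than by your two pointwise bounds: it sets $\Lambda(v)=\sum_{i=1}^m\lambda_i(v)$, defines $\Theta:=\inf_{\|v\|\le 1}\Lambda(v)$, and proves $\Theta>0$ by taking a minimizing sequence $\{v_n\}$, extracting a weak limit $v_0$ and a norm limit $\|v_n\|\to c_0$, showing first that $c_0=1$ (otherwise $\lim_n\Lambda(v_n)=\sum_i\bigl((1-c_0^2+(v_0,e_i)^2)^{1/2}-(v_0,e_i)\bigr)>0$) and then that $\lim_n\Lambda(v_n)=\sum_i\bigl(|(v_0,e_i)|-(v_0,e_i)\bigr)=0$ forces $(v_0,e_i)\ge 0$ for all $i$, contradicting the hypothesis applied to $p=v_0$. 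The uniform bound then gives $\Omega(t)=\sum_i\Omega_i(t)\le\Omega(0)-\Theta t$ and capture by time $\eta=\Omega(0)/\Theta$. This infimum-over-the-whole-ball lemma, not any per-instant inequality, is the missing idea in your write-up.

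Moreover, your own diagnosis shows why your salvage plan cannot be carried out by estimates: any unit vector $v$ orthogonal to $\zeta_1,\dots,\zeta_m$ gives $\lambda_i(v)=0$ for \emph{every} $i$ simultaneously, so both of your auxiliary bounds are defeated at once and no pointwise inequality can force progress; since $\mathrm{span}\{\zeta_1,\dots,\zeta_m\}$ is finite-dimensional while $\ell_2$ is not, such a $v$ always exists. The only thing that could exclude this escape direction is the hypothesis itself: applied to such a $p$ it would demand $(y_0-x_{k0},p)<0$ for some $k$, whereas all these inner products vanish. This means the hypothesis is actually unsatisfiable for finitely many pursuers in infinite-dimensional $\ell_2$, so the theorem holds only vacuously there --- and, strictly speaking, the paper's own weak-compactness step grazes the very point you flagged, since it applies the hypothesis to $p=v_0$ without ruling out $v_0=0$ (weak limits of unit vectors, e.g.\ along an orthonormal sequence, can vanish, and indeed $\Lambda(v)=0$ at any unit $v$ orthogonal to all $e_i$, so the claimed $\Theta>0$ fails non-vacuously). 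In short: the gap in your proposal is real --- you are missing the uniform positivity lemma $\Theta>0$ that drives the linear decay of $\sum_i\Omega_i$ --- and the orthogonal-drift obstruction you identified is exactly where your plan cannot succeed and where the paper's argument, too, needs the hypothesis to do work that in $\ell_2$ it can only do vacuously.
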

\begin{proof}
We define the pursuers' strategy as follow:
\begin{equation}\label{a}
u_i(t)=v(t)-\left(v(t),e_i\right)e_i+e_i\left(1-\|v(t)\|^2+\left(v(t),
e_i\right)^2\right)^{1/2},
\end{equation}
where $e_i=\dfrac{y_0-x_{i0}}{\|y_0-x_{i0}\|}$, $i=1,2,\ldots,m$.

The above strategy is admissible. Indeed

\begin{equation*}
\begin{split}
\|u_i(t)\|^2&=\|v(t)-(v(t),~e_i)e_i\|^2+2\big(v(t)-(v(t),~e_i)e_i,~e_i\big(1-\|v(t)\|^2+(v(t),~e_i)^2\big)^{1/2}\big)\\
&+1-\|v(t)\|^2+(v(t),~e_i)^2\\
&=\|v(t)\|^2-2(v(t),~e_i)^2+(v(t),~e_i)^2+(v(t),~e_i)^2+1-\|v(t)\|^2\leq
1.
\end{split}
\end{equation*}

By (\ref{a}), we have $y(t)-x_i(t)=e_i \Omega_i(t)$, where
\begin{equation*}
\Omega_i(t)=\|y_0-x_{i0}\|-\int_0^t\left(\left(1-\|v(s)\|^2+\left(v(s),e_i\right)^2\right)^{1/2}-\left(v(s),e_i\right)\right)
\,ds.
\end{equation*}
We are going to show that $\Omega_i(\tau)=0$, for some $i=1,2,\ldots,m$, and $\tau>0$.\\

It is clear that $\Omega_i(0)=\|y_0-x_{i0}\|>0$ for
$i=1,2,\ldots,m$.

Suppose that $\Omega(t)=\sum_{i=1}^{m}\Omega_i(t)$, thus
\begin{equation*}
\Omega(t)=\sum_{i=1}^{m}\|y_0-x_{i0}\|-\int_{0}^{t}\sum_{i=1}^{m}\left(\left(1-\|v(s)\|^2+(v(s),e_i)^2\right)^{1/2}-(v(s),e_i)\right)\,ds.
\end{equation*}
Obviously
\begin{equation*}
\Lambda(v)=\sum_{i=1}^{m}\left(\left(1-\|v\|^2+(v,e_i)^2\right)^{1/2}-(v,e_i)\right)\geq0.
\end{equation*}
Define
\begin{equation*}
\Theta:=\inf_{\|v\|\leq1}\Lambda(v),
\end{equation*}
so $\Theta>0 ~~\text{or}~~ \Theta=0$.

We show that $\Theta\neq0$. Assume by contradiction that
$\Theta=0$. Then there exists a minimizing sequence
$\{v_n\}_n\subset \ell_2$ with $\|v_n\|\leq 1$ for the value
$\Theta=0$, i.e.,
\begin{equation}\label{elso-osszef}
\lim_{n\to \infty}\Lambda(v_n)=0,
\end{equation}
 where
$$\Lambda(v)=\sum_{i=1}^m\left((1-\|v\|^2+(v,e_i)^2)^\frac{1}{2}-(v,e_i)\right)\geq 0.$$

On one hand, since the unit ball $B=\{v\in \ell_2:\|v\|\leq 1\}$
is weakly compact (but not strongly compact due to the fact that
$\ell_2$ is an infinite dimensional Hilbert space), we may extract
a subsequence (denoted in the same way) from $\{v_n\}$ which
converges weakly to an element $v_0\in B$, i.e.
$$v_n \xrightarrow[]{*} v_0\ {\rm as}\ n \xrightarrow[]{} \infty.$$
In particular, this fact implies that
\begin{equation}\label{masodik-osszef}
\lim_{n\to \infty}(v_n,w)= (v_0,w),\ \forall w\in \ell_2.
\end{equation}

On the other hand, since the real-valued sequence $\{\|v_n\|\}_n$
is bounded, up to some subsequence, we may assume that it
converges to an element $c_0\in [0,1]$. We claim that $c_0=1$.
Assume that $c_0<1.$ Then, we have by (\ref{elso-osszef}) and
(\ref{masodik-osszef}) that
\begin{eqnarray*}
  0 &=& \lim_{n\to \infty}\Lambda(v_n)\\&=&\lim_{n\to
\infty}\sum_{i=1}^m\left((1-\|v_n\|^2+(v_n,e_i)^2)^\frac{1}{2}-(v_n,e_i)\right) \\
  &=&
  \sum_{i=1}^m\left((1-c_0^2+(v_0,e_i)^2)^\frac{1}{2}-(v_0,e_i)\right)\\&>&0,
\end{eqnarray*}
a contradiction. Therefore, $c_0=1,$ i.e., $\lim_{n\to
\infty}\|v_n\|=1.$

Now, we come back again to (\ref{elso-osszef}), obtaining that
\begin{eqnarray*}
  0 &=& \lim_{n\to \infty}\Lambda(v_n)\\&=&\lim_{n\to
\infty}\sum_{i=1}^m\left((1-\|v_n\|^2+(v_n,e_i)^2)^\frac{1}{2}-(v_n,e_i)\right) \\
  &=&
  \sum_{i=1}^m\left(((v_0,e_i)^2)^\frac{1}{2}-(v_0,e_i)\right)\\&=&\sum_{i=1}^m\left(|(v_0,e_i)|-(v_0,e_i)\right).
\end{eqnarray*}
Since every term under the sum is non-negative, we necessarily
have that $|(v_0,e_i)|=(v_0,e_i)$ for all $i\in \{1,...,m\}.$
Therefore,
$$(v_0,e_i)=|(v_0,e_i)|\geq 0,\ \forall i\in \{1,...,m\},$$ which is inconsistent with the hypothesis of the theorem, therefore $\Theta>0$.

So,
\begin{equation*}
\Omega(t)\leq \Omega(0)-\int_{0}^{t}\Theta \,ds=\Omega(0)-\Theta
t,
\end{equation*}
therefore, in time $\eta=\frac{\Omega(0)}{\Theta}$ we have
$\Omega(\eta)=\sum_{i=1}^{m}\Omega_i(\eta)\leq0$, and then
$\Omega_i(\tau)=0$, for some $i=1,2,\ldots,m$, $\tau \in(0,\eta]$
and pursuit is complete.
\end{proof}
\section{Evasion problem and its solution}

\begin{dfn}
If there exists a strategy of the evader such that $x_i(t)\neq
y(t)$, $t>0$, then evasion is possible.
\end{dfn}
\begin{thm}
Suppose the initial positions of the pursuers and the evader in
the game (\ref{ab0}) are different and there exists a non-zero
vector $p\in \ell_2$ such that $\|p\|=1$ and
$(y_0-x_{i0},p)\geq0$, $i\in \{1,2,\ldots,m\}$, then evasion is
possible.
\end{thm}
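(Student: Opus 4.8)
The plan is to let the evader flee along the fixed direction $p$ supplied by the hypothesis, that is, to use the constant strategy $v(t)=p$ for all $t\geq 0$. Since $\|p\|=1$, this control satisfies $\|v(t)\|\leq 1$ and is therefore admissible; moreover the closed-loop system $\dot{y}=p$, $\dot{x}_i=u_i(t)$ plainly has the unique solution $y(t)=y_0+tp$, $x_i(t)=x_{i0}+\int_0^t u_i(s)\,ds$, so $v\equiv p$ is a legitimate strategy of the evader. First I would fix arbitrary admissible pursuer controls $u_i(\cdot)$ and record the relative position $y(t)-x_i(t)=(y_0-x_{i0})+\int_0^t\bigl(p-u_i(s)\bigr)\,ds$.

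The core of the argument is to test this relative position against $p$. Taking the inner product with $p$ and using $\|p\|=1$ gives
\begin{equation*}
(y(t)-x_i(t),p)=(y_0-x_{i0},p)+\int_0^t\bigl(1-(u_i(s),p)\bigr)\,ds.
\end{equation*}
By hypothesis $(y_0-x_{i0},p)\geq 0$, and by the Cauchy--Schwarz inequality together with $\|u_i(s)\|\leq 1$ we have $(u_i(s),p)\leq\|u_i(s)\|\,\|p\|\leq 1$, so the integrand $1-(u_i(s),p)$ is non-negative. Hence $(y(t)-x_i(t),p)\geq 0$ for every $t\geq 0$ and every $i$, and this quantity vanishes only when both summands vanish.

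To conclude I would argue by contradiction. Suppose pursuit were completed, i.e.\ $y(\tau)=x_i(\tau)$ for some $\tau>0$ and some index $i$. Then $(y(\tau)-x_i(\tau),p)=0$, which by the non-negativity just established forces $(y_0-x_{i0},p)=0$ and $\int_0^\tau\bigl(1-(u_i(s),p)\bigr)\,ds=0$; since the integrand is non-negative, $(u_i(s),p)=1$ for almost every $s\in[0,\tau]$. The decisive step is the equality case of Cauchy--Schwarz: $(u_i(s),p)=1$ with $\|u_i(s)\|\leq 1$ and $\|p\|=1$ forces $u_i(s)=p$ for almost every $s\in[0,\tau]$. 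Substituting this back yields $y(\tau)-x_i(\tau)=(y_0-x_{i0})+\int_0^\tau(p-p)\,ds=y_0-x_{i0}$, which is non-zero because the initial positions are distinct, contradicting $y(\tau)=x_i(\tau)$. I expect the main obstacle to be exactly this equality case of Cauchy--Schwarz, namely upgrading $(u_i(s),p)=1$ to $u_i(s)=p$ almost everywhere, since it is this identity that collapses the integral term and lays bare the surviving non-zero initial gap $y_0-x_{i0}$.
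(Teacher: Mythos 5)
Your proposal is correct and follows essentially the same route as the paper: the constant strategy $v(t)=p$, the inner product of $y(t)-x_i(t)$ with $p$, vanishing of both non-negative terms at a hypothetical capture time, the Cauchy--Schwarz equality case forcing $u_i(s)=p$ almost everywhere, and the resulting contradiction $y(\tau)-x_i(\tau)=y_0-x_{i0}\neq 0$. Your explicit attention to upgrading $(u_i(s),p)=1$ to $u_i(s)=p$ via the equality case is in fact slightly more careful than the paper, which asserts this step without comment.
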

\begin{proof}
We define the evader's strategy as follow:
\begin{equation}\label{ae}
v(t)=p, \quad t\geq 0.
\end{equation}
Obviously the above strategy is admissible.

We have
\begin{equation*}
\left(y(t)-x_i(t),p\right)=\left(y_0-x_{i0},p\right)+\int_{0}^{t}\left(v(s),p\right)\,ds-\int_{0}^{t}\left(u_i(s),p\right)\,ds.
\end{equation*}

By taking strategy (\ref{ae}) we obtain

\begin{equation*}
\left(y(t)-x_i(t),p\right)=\left(y_0-x_{i0},p\right)+\int_{0}^{t}\left[1-\left(u_i(s),p\right)\right]\,ds.
\end{equation*}

Let's assume that evasion is not possible, so there are $\tau
>0$ and $k\in \{1,2,\ldots,m\}$ such that $y(\tau)=x_k(\tau)$.
Then

\begin{equation*}
\left(y(\tau)-x_k(\tau),p\right)=(y_0-x_{k0},p)+\int_{0}^{\tau}\left[1-\left(u_k(s),p\right)\right]\,ds=0.
\end{equation*}

By the assumption of the theorem, $(y_0-x_{k0},p)\geq0$. On the
other hand $\|u_i(t)\|\leq 1$, and then

\begin{equation*}
|(u_i(t),p)|\leq \|u_i(t)\|\cdot\|p\|\leq 1, \quad
t\in\left[0,\tau\right],
\end{equation*}

so $(u_i(t),p)\leq 1$. Thus $(y_0-x_{k0},p)=0$ and then

\begin{equation*}
\int_{0}^{\tau}\left[1-\left(u_k(s),p\right)\right]\,ds=0.
\end{equation*}

From the above equality we obtain $1-\left(u_k(s),p\right)=0$,
$s\in\left[0,\tau\right]$, almost everywhere. Hence
$\left(u_k(s),p\right)=1$, and $u_k(s)=p$,
$s\in\left[0,\tau\right]$.

Therefore

\begin{equation*}
y(\tau)-x_k(\tau)=y_0+\int_{0}^{\tau}p
\,ds-x_{k0}-\int_{0}^{\tau}p \,ds=y_0-x_{k0}=0,
\end{equation*}

which is a contradiction with the initial positions of the
pursuers and the evader. So $x_i(t)\neq y(t)$, $i\in
\{1,2,\ldots,m\}$, $t>0$. In other words, evasion of the evader
from all the pursuers is possible.
\end{proof}
\section{Conclusion}

We considered a pursuit--evasion problem with finite number of
pursuers and one evader in the Hilbert space $\ell_2$. The
controls of pursuers and the evader are subject to geometric
constraints. We constructed admissible strategies for pursuers
which guarantee capture of the evader as well as an admissible
strategy for the evader which guarantees evasion from all
pursuers.
\\\\
 {\bf Acknowledgments.} The present research was supported by the MEDAlics, Research Center at Universit\`{a} per Stranieri Dante Alighieri, Reggio Calabria, Italy.

\end{document}